\def\red#1{\begin{color}{black}#1\end{color}}
\newtheorem{theorem}{Theorem}
\newtheorem{problem}{Problem}
\begin{document}
 \title{On the maximum order complexity of subsequences of the Thue-Morse and Rudin-Shapiro sequence along squares}

 \author{Zhimin Sun$^1$ and Arne Winterhof$^2$\\ $^1$   Faculty of Mathematics and Statistics,\\ Hubei Key Laboratory of Applied Mathematics,\\ Hubei
University, Wuhan, 430062,
 China\\
 e-mail: zmsun@hubu.edu.cn\\
 $^2$ Johann Radon Institute for Computational and Applied
Mathematics,\\
Austrian Academy of Sciences,\\
Altenberger Stra{\ss}e 69, A-4040 Linz, Austria\\
e-mail: arne.winterhof@oeaw.ac.at}

\date{}

 \maketitle
 
 \begin{abstract}
 	Automatic sequences such as the Thue-Morse sequence and the Rudin-Shapiro sequence are highly predictable and thus not suitable in cryptography. 
 	In particular, they have small expansion complexity.
 	However, they still have a large maximum order complexity.
 	
 	Certain subsequences of automatic sequences are not automatic anymore and may be attractive candidates for applications in cryptography. In this paper we show that subsequences along the squares 
 	of certain pattern sequences including the Thue-Morse sequence and the Rudin-Shapiro sequence have also large maximum order complexity but do not suffer a small expansion complexity anymore.
 \end{abstract}

Keywords. Thue-Morse sequence, Rudin-Shapiro sequence, automatic sequence, maximum order complexity, measures of pseudorandomness \\

MSC 2010. 11B85, 11K45
 
\section{Introduction}
 For a positive integer $N$ and a sequence ${\cal S}=(s_i)_{i=0}^\infty$ over the finite field~${\mathbb F}_2$ of two elements with $(s_0,\ldots,s_{N-2})\ne (a,\ldots,a)$, $a\in \{0,1\}$,
 the {\em $N$th maximum order complexity} $M({\cal S},N)$ (or {\em $N$th nonlinear complexity}) is the smallest positive integer $M$
such that there is a polynomial $f(x_1,\ldots,x_M)\in {\mathbb F}_2[x_1,\ldots,x_M]$
with
$$s_{i+M}=f(s_i,s_{i+1},\ldots,s_{i+M-1}),\quad 0\le i\le N-M-1,$$
see \cite{ja89,ja91}. If $s_i=a$ for $i=0,\ldots,N-2$, we define $M({\cal S},N)=0$ if $s_{N-1}=a$ and $M({\cal S},N)=N-1$ if $s_{N-1}\ne a$.
A sequence with small $N$th maximum order complexity (for sufficiently large $N$) is predictable and thus unsuitable in cryptography.
However, there are predictable sequences with large $N$th maximum order complexity and further quality measures for cryptographic sequences have to be studied.

Diem \cite{di12} introduced the expansion complexity of the sequence ${\cal S}$ as follows. We define the {\em generating function} $G(x)$ of ${\cal S}$ by
$$G(x)=\sum_{i=0}^\infty s_i x^i,$$
viewed as a formal power series over ${\mathbb F}_2$. (Note the change by the factor $x$ compared to the definition in~\cite{di12}.)
For a positive integer $N$, the {\em $N$th expansion complexity} $E({\cal S},N)$ is $0$ if $s_0=\ldots=s_{N-1}=0$ and otherwise the least total degree
of any nonzero polynomial $h(x,y)\in {\mathbb F}_2[x,y]$ with
$$h(x,G(x))\equiv 0 \bmod x^N.$$
A sequence with small $N$th expansion complexity is predictable.

Automatic sequences such as the Thue-Morse sequence and the Rudin-Shapiro sequence have a large $N$th maximum order complexity of order of magnitude $N$, see \cite{suwi}.
However, by Christol's theorem \cite{ch} they are characterized by 
$$\sup_{N\ge 1} E({\cal S},N)<\infty,$$
see also \cite[Theorem~12.2.5]{alsh}.

For example, the {\em Thue-Morse sequence} ${\cal T}=(t_i)_{i=0}^\infty$ over ${\mathbb F}_2$ is defined by
$$t_i=\left\{ \begin{array}{cl} t_{i/2} & \mbox{if $i$ is even},\\
t_{(i-1)/2}+1 & \mbox{if $i$ is odd},
\end{array}\right.\quad i=1,2,\ldots
$$
with initial value $t_0=0$.  
An explicit formula for $M({\cal T},N)$ is given in \cite[Theorem 1]{suwi}. In particular, it satisfies 
$$M({\cal T},N)\ge \frac{N}{5}+1, \quad N\ge 4.$$
However, taking
$$h(x,y)=(x+1)^3 y^2+(x+1)^2 y+x,$$
its generating function $G(x)$ satisfies $h(x,G(x))=0$
and thus
$$E({\cal T},N)\le 5,\quad N=1,2,\ldots$$
Hence, despite of a large $N$th maximum order complexity, the Thue-Morse sequence is highly predictable.
\red{Other indicators for its predictability are a linear subword complexity, see \cite[Exercise 10.11.10]{alsh} or \cite{br,luva}, and a large correlation measure of order $2$ \cite{masa}.}

Subsequences of automatic sequences may be not automatic anymore and can look much more random. 
For example, the subsequence of the Thue-Morse sequence along squares ${\cal T}'=(t_{i^2})_{i=0}^\infty$ is not automatic by \cite[Theorem 6.10.1]{alsh}, that is,
$$\sup_{N\ge 1} E({\cal T}',N)=\infty,$$
\red{it has the largest possible subword complexity \cite{mo07} and is even} normal \cite{drmari17}.
In Section~\ref{Thue} we prove a lower bound on $M({\cal T}',N)$ of order of magnitude $N^{1/2}$, which indicates that ${\cal T}'$ is rather unpredictable. 

More generally, for a positive integer $k$ we study subsequences along the squares of the {\em pattern sequences} ${\cal P}_{k}=(p_n)_{n=0}^\infty$ over ${\mathbb F}_2$ defined by
$$p_n\equiv s_k(n) \bmod 2,$$
where $P_k=11\ldots 1 \in \mathbb{F}_{2}^{k}$ is the all $1$ pattern of length $k$ and $s_k(n)$ 
is the number of occurrences of $P_k$ in the binary representation of $n$.
For $k=1$ we get the Thue-Morse sequence and for $k=2$ the {\em Rudin-Shapiro sequence}.
In Section~\ref{pattern} for $k\ge 2$ we prove a lower bound on the maximum order complexity of ${\cal P}_k'$ of order of magnitude $N^{1/2}$.
Note that the proof is slightly different than for $k=1$.

\red{We finish this paper with a list of open problems in Section~\ref{open}.}

\section{The Thue-Morse sequence along squares}
\label{Thue}

 \begin{theorem}\label{th1}
 Let ${\cal T}^{'} =(t_{i^{2}})_{i=0}^\infty$ be the subsequence of the Thue-Morse sequence along squares.
 Then the $N$th maximum order complexity of ${\cal T}'$ satisfies
   $$ M({\cal T}^{'} ,N)\geq \sqrt{\frac{2N}{5}},\quad N\ge 21.$$
\end{theorem}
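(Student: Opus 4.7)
The plan is to exhibit, for each $N\ge 21$, a \emph{bad collision} in $\mathcal{T}'$: indices $i<j$ with $j+M\le N-1$ satisfying $t_{(i+l)^2}=t_{(j+l)^2}$ for $l=0,\dots,M-1$ but $t_{(i+M)^2}\ne t_{(j+M)^2}$. Any such triple forces $M(\mathcal{T}',N)\ge M+1$, so it suffices to produce one with $M+1\ge\sqrt{2N/5}$.

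The main tool is the identity
\[
t_{(2^a+r)^2}\equiv 1+t_r+t_{r^2}\pmod 2,\qquad a\ge 3,\ r^2<2^{a+1},
\]
obtained by expanding $(2^a+r)^2=2^{2a}+2^{a+1}r+r^2$: in this range the three summands have pairwise disjoint binary supports, so no carries occur and the digit sum is additive modulo $2$. The crucial feature is that the right-hand side is \emph{independent of $a$}. Consequently, for $3\le a<a'$, the subsequences of $\mathcal{T}'$ starting at the positions $2^a$ and $2^{a'}$ agree at every offset $r$ with $r^2<2^{a+1}$; in particular this covers $r=0,1,\dots,\lfloor 2^{(a+1)/2}\rfloor-1$.

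To generate a mismatch just past the match region, I would exploit an accidental bit coincidence that creates a single uncancelled carry for the smaller $a$ but not for $a'$. The base case $a=3$, $r=4$ is emblematic: $(2^3+4)^2=2^6+2^6+2^4=2^7+2^4$ collapses to two binary $1$'s (parity $0$) because the exponents $2a=6$ and $(3a+3)/2=6$ coincide, whereas $(2^{a'}+4)^2=2^{2a'}+2^{a'+3}+2^4$ has three distinct $1$'s (parity $1$) for every $a'\ge 4$. Taking $a'=4$ yields the bad collision $i=8$, $j=16$, $M=4$, $j+M=20$, valid for every $N\ge 21$. This already gives $M(\mathcal{T}',N)\ge 5\ge\sqrt{2N/5}$ for $21\le N\le 62$.

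For larger $N$ the same principle applies at higher scales. For each odd $a\ge 5$, a direct expansion of $(2^a+L+1)^2$ with $L=2^{(a+1)/2}$ yields five distinct powers of $2$ (a formal six-term sum collapses via $2\cdot 2^{a+1}=2^{a+2}$), so its parity is $1$; meanwhile $(2^{a+1}+L+1)^2$ exhibits no such cancellation and contributes six distinct powers, parity $0$. This produces a bad collision of window $M=2^{(a+1)/2}+1$ between $i=2^a$ and $j=2^{a+1}$, requiring $N\ge 2^{a+1}+2^{(a+1)/2}+2$ and giving $M+1\ge\sqrt{2N/5}$ on the corresponding range. Gaps in the $N$-ranges are filled by analogous constructions at even $a$ (for instance $a=4$: $(2^4+2^3)^2=2^9+2^6$ versus $(2^5+2^3)^2=2^{10}+2^9+2^6$ gives $M=8$ between $i=16$ and $j=32$). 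The main technical obstacle is verifying, for each $a$ appearing in the case analysis, that the asserted first mismatch is not preceded by earlier carry-parity discrepancies in the extended range $r\in[\lfloor 2^{(a+1)/2}\rfloor,M)$; for each fixed $a$ this reduces to a finite check of binary supports of the three summands.
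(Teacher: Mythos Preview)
Your core idea coincides with the paper's: compare the windows of $\mathcal T'$ starting at $2^{a}$ and $2^{a+1}$, using the no-carry identity $t_{(2^a+r)^2}\equiv 1+t_r+t_{r^2}$ for $r^2<2^{a+1}$ to obtain a match of length $W=\lfloor\sqrt{2^{a+1}-1}\rfloor+1$. Your small-$a$ computations are correct.

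The genuine gap is your insistence on locating the \emph{first} mismatch right at offset $M$. For odd $a\ge 5$ you manage this cleanly at $r=L+1$, but the odd-$a$ ranges do not tile all $N\ge 21$ (e.g.\ $N\in[63,73]$, $[251,273]$, \dots\ are missed). You then appeal to ``analogous constructions at even $a$'', but only treat $a=4$ and describe the remaining cases as ``a finite check for each fixed $a$''. Infinitely many such $a$ are needed, and for each even $a$ the gap between the end of the identity's range ($\approx 2^{(a+1)/2}$) and your candidate mismatch grows, so the checks do not collapse to a single verification. As written, the proof is incomplete.

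The paper sidesteps this entirely by \emph{not} seeking the first mismatch. If two length-$W$ windows agree and $M(\mathcal T',N)\le W$, the recurrence propagates the agreement to every offset $r$ with $2^{a+1}+r\le N-1$; hence \emph{any} later mismatch within range forces $M(\mathcal T',N)\ge W+1$. The paper then uses a single mismatch that works uniformly for every $a\ge 2$, namely $r=2^{a-1}$: indeed $(2^a+2^{a-1})^2=9\cdot 2^{2a-2}$ has digit sum $2$, while $(2^{a+1}+2^{a-1})^2=25\cdot 2^{2a-2}$ has digit sum $3$. (This is precisely your $a=4$ observation, and it needs no case split on the parity of $a$.) Choosing $a=\ell+1$ with $5\cdot 2^{\ell}<N\le 5\cdot 2^{\ell+1}$ ensures $2^{a+1}+2^{a-1}\le N-1$, and one obtains $M(\mathcal T',N)\ge \lfloor\sqrt{2^{\ell+2}-1}\rfloor+2\ge\sqrt{2N/5}$ in a single stroke. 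Your argument is easily repaired along these lines: drop the ``immediate mismatch'' requirement, invoke propagation, and use the uniform mismatch at $r=2^{a-1}$.
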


 \begin{proof}
Let $\ell\ge 2$ be the integer defined by 
\begin{equation}\label{N} 5\cdot 2^\ell<N\le 5\cdot 2^{\ell+1}
\end{equation}
and note that the Thue-Morse sequence satisfies
$$t_n\equiv s_1(n)\bmod 2,\quad n=0,1,\ldots,$$
where $s_1(n)$ denotes the number of $n_i=1$ in the binary expansion of $n$, that is,  
$$n=\sum_{i=0}^\infty n_i2^i\quad\mbox{with}\quad n_i\in \{0,1\}.$$ 
(Note that only finitely many $n_i$ are nonzero.)

For $i=0,1,\ldots,\left\lfloor \sqrt{2^{\ell+2}-1}\right\rfloor$ we have (since $\ell\ge 2$)
$$t_{(i+2^{\ell+1})^2}\equiv s_1(i^2+i2^{\ell +2}+2^{2\ell+2})\equiv s_1(i^2)+s_1(i)+1\bmod 2$$
and 
$$t_{(i+2^{\ell+2})^2}\equiv s_1(i^2+i2^{\ell +3}+2^{2\ell+4})\equiv s_1(i^2)+s_1(i)+1\bmod 2$$
and thus 
\begin{equation}\label{ti} t_{(i+2^{\ell+1})^2}=t_{(i+2^{\ell+2})^2},\quad i=0,1,\ldots,\left\lfloor \sqrt{2^{\ell+2}-1}\right\rfloor.
\end{equation}
Moreover, we have 
$$s_1((2^\ell+2^{\ell+1})^2)=s_1(2^{2\ell}+2^{2\ell+3})=2$$
but
$$s_1((2^\ell+2^{\ell+2})^2)=s_1(2^{2\ell}+2^{2\ell+3}+2^{2\ell+4})=3.$$
Hence,
\begin{equation}\label{contra}
t_{(2^{\ell}+2^{\ell+1})^2}\ne t_{(2^\ell+2^{\ell+2})^2}.
\end{equation}
Now assume 
$$M=M({\cal T}',N)\le\left\lfloor \sqrt{2^{\ell+2}-1}\right\rfloor+1,$$
that is, there is a polynomial $f(x_1,\ldots,x_M)$ in $M$ variables with 
\begin{equation}\label{tM} t_{(\red{j}+M)^2}=f(t_{\red{j}^2},\ldots,t_{(\red{j}+M-1)^2}),\quad \red{j}=0,1,\ldots,N-M-1.
\end{equation}
\red{Note that for $0\le k\le N-M-1$ the values of $t_{(k+M)^2},t_{(k+M+1)^2},\ldots,t_{(N-1)^2}$ are uniquely determined by the values of $t_{k^2},\ldots,t_{(k+M-1)^2}$
and by applying successively the recurrence $(\ref{tM})$ for $j=k,\ldots,N-M-1$. In particular, if 
$$(t_{k_1^2},\ldots,t_{(k_1+M-1)^2})=(t_{k_2^2},\ldots,t_{(k_2+M-1)^2})$$ 
for some $k_1$ and $k_2$ with $0\le k_1<k_2\le N-M-1$, we get also
$$(t_{(k_1+M)^2},\ldots,t_{(k_1+N-k_2-1)^2})=(t_{(k_2+M)^2},\ldots,t_{(N-1)^2}).$$
Taking $k_1=2^{\ell+1}$ and $k_2=2^{\ell+2}$ we get from 
$(\ref{ti})$:
$$(t_{(2^{\ell+1}+M)^2},\ldots,t_{(N-2^{\ell+1}-1)^2})
=(t_{(2^{\ell+2}+M)^2},\ldots,t_{(N-1)^2}).$$
}
Since $N-1\ge 2^\ell+2^{\ell+2}$ (by the lower bound in $(\ref{N})$) \red{and $M\le 2^\ell$ this includes 
$$t_{(2^\ell+2^{\ell+1})^2}=t_{(2^\ell+2^{\ell+2})^2}$$ 
which} contradicts $(\ref{contra})$ and we get (using the upper bound in $(\ref{N})$)
$$M({\cal T}',N)\ge \left\lfloor \sqrt{2^{\ell+2}-1}\right\rfloor+2\ge \sqrt{\frac{2N}{5}},$$
which completes the proof.
\end{proof}

Remarks. 
\begin{enumerate}
\item Since the $N$th linear complexity is lower bounded by the $N$th maximum order complexity, this result shows that an attack via the Berlekamp-Massey algorithm fails for sufficiently large $N$. 
\item Our experimental results support the conjecture that Theorem~\ref{th1} is (up to the constant) best possible, that is, $M({\cal T}',N)$ is of order of magnitude $\sqrt{N}$.
\item \red{Our lower bound is strong enough to guarantee that ${\cal T}'$ is not vulnerable under any known algorithm that calculates a shortest recurrence relation. This is even true if we consider the simpler 
problem of finding a shortest linear recurrence, see Remark 1. However, it does not guarantee that there
is no other efficient way to attack our sequence although we are not aware of any such possible attack. Hence it is still important to study further features of this sequence such as its expansion complexity
or its correlation measure of order $k$. For further discussions about predictability and measures of pseudorandomness we refer to the surveys \cite{homewi,towi}.}
\item Further experiments indicate that also the $N$th expansion complexity of ${\cal T}'$ is quite large, that is, we believe that its order of magnitude is close to the best possible order $N^{1/2}$.
(We have $E({\cal S},N)\le \sqrt{2N}$ for any sequence ${\cal S}$ by \cite[Theorem~1]{gomeni}.)
\end{enumerate}

\section{Pattern sequences along squares for $k\ge 2$}
\label{pattern}

 \begin{theorem}\label{th2}
	For $k\ge 2$ let ${\cal P}_k^{'} =(p_{i^{2}})_{i=0}^\infty$ be the subsequence of the pattern sequence ${\cal P}_k$ along the squares.
	Then the $N$th maximum order complexity of~${\cal P}_k^{'}$  satisfies
	$$ M({\cal P}_k^{'} ,N)\geq \left(\frac{N}{8}\right)^{1/2},\quad N\ge 2^{2k+2}.$$
\end{theorem}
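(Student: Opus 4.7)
The approach mirrors that of Theorem~1, but requires a modified distinguishing index because $s_k$ (rather than $s_1$) must be flipped modulo~$2$. I would fix $\ell$ to be the largest integer with $(2^{k+1}+1)\cdot 2^\ell \le N-1$, which is nonvacuous since $N\ge 2^{2k+2}$, and set $L = 2^{\ell+1}$, $L' = 2^{\ell+2}$.

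First I would establish the initial equality $p_{(i+L)^2} = p_{(i+L')^2}$ for $i \in \{0,1,\ldots,B\}$ with $B = \lfloor\sqrt{2^{\ell-k+3}-1}\rfloor$. Expanding
$$(i+L)^2 = i^2 + i\cdot 2^{\ell+2} + 2^{2\ell+2}, \qquad (i+L')^2 = i^2 + i\cdot 2^{\ell+3} + 2^{2\ell+4},$$
the bound on $i$ ensures that the three summands on each side have disjoint binary supports separated by at least $k-1$ zero bits. Consequently no run of $k$ consecutive ones can span such a gap, so $s_k$ is additive modulo $2$ on each side, and both sums reduce to $s_k(i^2)+s_k(i)\pmod 2$.

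Next I would take the distinguishing index $i^* = (2^{k+1}-3)\cdot 2^\ell$, giving $i^*+L = (2^{k+1}-1)\cdot 2^\ell$ and $i^*+L' = (2^{k+1}+1)\cdot 2^\ell$. From the identity $(2^{k+1}-1)^2 = 1 + 2^{k+2}(2^k-1)$, the binary expansion of $(i^*+L)^2/2^{2\ell}$ consists of an isolated bit at position $0$ together with a run of exactly $k$ consecutive ones at positions $k+2,\ldots,2k+1$; hence $s_k((i^*+L)^2)=1$. On the other hand $(2^{k+1}+1)^2 = 1 + 2^{k+2} + 2^{2k+2}$ has three isolated bits, so $s_k((i^*+L')^2)=0$ for $k\ge 2$. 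Thus $p_{(i^*+L)^2}\ne p_{(i^*+L')^2}$.

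Finally, I would argue by contradiction via the propagation from Theorem~1: assuming $M = M({\cal P}_k^{'},N)\le B+1$, the initial equalities from step two together with the assumed recurrence force $p_{(i+L)^2} = p_{(i+L')^2}$ for every $i$ with $i+L'\le N-1$, in particular for $i=i^*$ (valid by our choice of $\ell$), contradicting step three. Therefore $M\ge B+2$, and after substituting the relation between $N$ and $\ell$ this yields $M\ge (N/8)^{1/2}$. The main obstacle compared to $k=1$ is engineering the distinguishing step: a single-bit collision no longer changes $s_k\bmod 2$ for $k\ge 2$, so one must instead produce a run of exactly $k$ ones --- precisely what the identity $(2^{k+1}-1)^2 = 1 + 2^{k+2}(2^k-1)$ achieves.
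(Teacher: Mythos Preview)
Your overall architecture matches the paper's: two shifted copies of the sequence agree on an initial segment, then differ at a carefully chosen index, forcing $M$ to exceed the segment length. Your distinguishing step via $(2^{k+1}\pm 1)^2=1+2^{k+2}(2^k\mp 1)$ is in fact neater than the paper's four-case analysis on the parity and size of $k$. However, the numerics do not deliver the stated bound.

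The essential problem is the choice $L=2^{\ell+1}$, $L'=2^{\ell+2}$, which forces the small window $B=\lfloor\sqrt{2^{\ell-k+3}-1}\rfloor$. From your definition of $\ell$ one has $N>(2^{k+1}+1)\,2^{\ell}$, so $\sqrt{N/8}>2^{(\ell+k-2)/2}$, whereas $B+2\le 2^{(\ell-k+3)/2}+2$. For $k\ge 3$ the exponent $(\ell-k+3)/2$ is strictly smaller than $(\ell+k-2)/2$, and the claimed inequality $B+2\ge (N/8)^{1/2}$ fails for all large $N$; e.g.\ for $k=3$, $\ell=10$ one gets $B+2\le 34$ but $(N/8)^{1/2}>46$. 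The paper instead takes offsets $2^{\ell+2k-1}$ and $2^{\ell+2k}$, which give the much larger window $\lfloor\sqrt{2^{\ell+2k-1}-1}\rfloor\approx\sqrt{N/4}$; the price is that your clean distinguishing index then overshoots $N$, and a case analysis is used in its place. In short, your uniform distinguishing trick and the target constant $1/\sqrt{8}$ are in tension.

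There is also a smaller glitch in your first step: the claim that $i\cdot 2^{\ell+2}$ and $2^{2\ell+2}$ are separated by $k-1$ zero bits is false for small $\ell$. With $k=2$, $N=64$ one gets $\ell=2$, $B=2$, and at $i=B=2$ one computes $(i+L)^2=100=1100100_2$ with $s_2=1$ but $(i+L')^2=324=101000100_2$ with $s_2=0$, so the initial equality $p_{(i+L)^2}=p_{(i+L')^2}$ already fails on the claimed range.
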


\begin{proof}
Let $\ell$ be the positive integer defined by 
\begin{equation}\label{Nrange} 2^{2k+\ell+1}\le N< 2^{2k+\ell+2}.
\end{equation}

For $0\le i\le \left\lfloor \sqrt{2^{\ell+2k-1}-1}\right\rfloor$ we have
$$s_k((i+2^{\ell+2k-1})^2)=s_k(i^2+i2^{\ell+2k}+2^{2\ell+4k-2})=s_k(i^2)+s_k(i)$$
as well as
$$s_k((i+2^{\ell+2k})^2)=s_k(i^2)+s_k(i).$$
\red{Thus
\begin{equation}\label{sk} p_{(i+2^{\ell+2k-1})^2}=p_{(i+2^{\ell+2k})^2},\quad i=0,1,\ldots, \left\lfloor \sqrt{2^{\ell+2k-1}-1}\right\rfloor.
\end{equation}}

For $k=2$ we have
$$s_2((2^{\ell+2}+2^{\ell+3})^2)=s_2(1+2^{3})=0$$
but
$$s_2((2^{\ell+2}+2^{\ell+4})^2)=s_2(1+2^3+2^4)=1$$
\red{and thus
\begin{equation}\label{k=2} p_{(2^{\ell+2}+2^{\ell+3})^2}\ne p_{(2^{\ell+2}+2^{\ell+4})^2}.
\end{equation}}

For even $k>2$ we have
$$s_k(((2^k-1)2^{\ell}+2^{2k-1+\ell})^2)=s_k(1+2^{3k}-2^{k+1}+2^{4k-2})=k\equiv 0\bmod 2$$
but
$$s_k(((2^k-1)2^{\ell}+2^{2k+\ell})^2)=s_k(1+(2^{2k}-2^{k+1})+(2^{3k+1}-2^{2k+1})+2^{4k})=1$$
\red{and thus
\begin{equation}\label{keven} p_{((2^k-1)2^{\ell}+2^{2k-1+\ell})^2}\ne p_{((2^k-1)2^{\ell}+2^{2k+\ell})^2}.
\end{equation}
}

For $k=3$ we have
$$s_3((7\cdot 2^{\ell+3}+2^{\ell+5})^2)=s_3(1+2^7-2^3)=2\equiv 0\bmod 2$$
but
$$s_3((7\cdot 2^{\ell+3}+2^{\ell+6})^2)=s_3(1+2^8-2^5)=1$$
\red{and thus
\begin{equation}\label{k=3}
 p_{(7\cdot 2^{\ell+3}+2^{\ell+5})^2}\ne p_{(7\cdot 2^{\ell+3}+2^{\ell+6})^2}.
\end{equation}}

For odd $k>3$ we have
$$s_k(((2^{k-1}-1)2^{\ell\red{+2}}+2^{2k-\red{1}+\ell})^2)=s_k(1+(2^{3k-3}-2^k)+2^{4k-6})=k-2\equiv 1\bmod 2$$
but
\begin{eqnarray*}
&&s_k(((2^{k-1}-1)2^{\ell\red{+2}}+2^{2k\red{+\ell}})^2)\\&=&s_k(1+(2^{2k-2}-2^k)+(2^{3k-2}-2^{2k-1})+2^{4k-4})\\
&=&0
\end{eqnarray*}
\red{and thus
\begin{equation}\label{kodd}
 p_{((2^{k-1}-1)2^{\ell\red{+2}}+2^{2k-\red{1}+\ell})^2}\ne p_{((2^{k-1}-1)2^{\ell\red{+2}}+2^{2k\red{+\ell}})^2}.
\end{equation}
}

Now the result follows the same way as Theorem~\ref{th1} \red{as follows}.

\red{Assume $M=M({\cal P}'_k,N)\le \left\lfloor \sqrt{2^{\ell+2k-1}-1}\right\rfloor+1$ and thus there is a recurrence of order $M$ which successively continues $(\ref{sk})$ to get
$$p_{(i+2^{\ell+2k-1})^2}=p_{(i+2^{\ell+2k})^2},\quad i=0,1,\ldots,N-2^{\ell+2k}-1.$$
Choosing 
$$i=\left\{\begin{array}{ll} 2^{\ell+2}, & k=2,\\
          (2^k-1)2^\ell,  & k>2 \mbox{ and $k$ even},\\
       7\cdot 2^{\ell+3}, & k=3,\\
          ((2^{k-1}-1)2^{\ell\red{+2}}, & k>3 \mbox{ and $k$ odd},
           \end{array}\right.$$
we get a contradiction to $(\ref{k=2})$, $(\ref{keven})$, $(\ref{k=3})$ or $(\ref{kodd})$, respectively.
Hence, 
$$M\ge \left\lfloor \sqrt{2^{\ell+2k-1}-1}\right\rfloor+2\ge \left(\frac{N}{8}\right)^{1/2}$$
by $(\ref{Nrange})$.
}
\end{proof}

\red{
\section{Open problems}\label{open}
}\red{
\begin{problem}\cite[Conjecture 1]{drmari17}
  Show that the subsequences of the Thue-Morse sequence (pattern sequence) along any polynomial of degree $d\ge 2$ are normal.
\end{problem}
This problem may be out of reach and we state some weaker problems.
}\red{
It is known that the subword complexity is maximal if $d=2$ \cite{mo07}. For $d\ge 3$ a lower bound on the subword complexity is given in \cite{mo07}, as well.
\begin{problem}\cite[Open Question 4]{mo07}
 Show that the subword complexity of the subsequence of the Thue-Morse sequence along any polynomial of degree $d\ge 3$ is maximal.
\end{problem}
}\red{
\begin{problem}\cite[above Conjecture 1]{drmari17}
 Determine the frequency of $0$ and $1$ in the subsequence of the Thue-Morse sequence along any polynomial of degree $d\ge 3$.
\end{problem}
}\red{
\begin{problem}
 Extend Theorems~\ref{th1} and \ref{th2} to any polynomial of degree $d\ge 2$.
\end{problem}
}\red{
\begin{problem} Prove upper bounds on the correlation measure of order $k$ for subsequences of the Thue-Morse sequence along squares (polynomials). 
\end{problem}
}\red{
\begin{problem}
 Prove lower bounds on the expansion complexity of the Thue-Morse sequence along squares (polynomials).
\end{problem}
}

\section*{Acknowledgments}
\red{We thank the referees for their careful reading and their valuable remarks.}
The first author is  supported by China Scholarship Council and the National Natural Science Foundation of China Grant 61472120.
The second author is supported by the Austrian Science Fund FWF Project \red{P 30405-N32}.

 \end{document}